\newtheorem*{theoremA}{Theorem A}
\newtheorem{proposition}{Proposition}%[theorem]
\newtheorem{corollary}{Corollary}%[theorem]
\theoremstyle{definition}
\newtheorem{definition}{Definition}%[section]
\newtheorem{example}{Example}%[section]
\theoremstyle{remark}
\def\R#1{\mathbb{R}^{#1}}
\def\I{\mathrm{i}}
\def\half#1#2{\begin{matrix}\frac{#1}{#2}\end{matrix}}
\DeclareMathOperator{\im}{\mathrm{Im}}
\DeclareMathOperator{\re}{\mathrm{Re}}
\DeclareMathOperator{\cn}{\mathrm{cn}}
\begin{document}

\title{New examples of higher-dimensional
  minimal hypersurfaces}

\author{Sergienko Vladimir V.}

\author{Tkachev Vladimir G.}
\address{Department of Mathematics, Link\"oping University, 581 83 Link\"oping, Sweden}
\email{\texttt{vladimir.tkatjev@liu.se}}

% Address of record for the research reported here

\thanks {An updated version of a earlier preprint of 1999}
\thanks{}
\keywords{minimal hypersurfaces,  entire solution}

%
%  Math Subject Classifications
%
%\subjclass{Primary 53C42, 49Q05; Secondary 53A35}

%
% Key words
%
%\keywords{Minimal surfaces}

\begin{abstract}
The main results of the paper are Proposition~3 and 4 which provide an effective way to construct minimal hypersurfaces in a Euclidean space. We demonstrate our technique by several new examples.

\textit{This note is English translation of an earlier note written by the authors (in Russian) in September 1999. The final version of the paper will be published somewhere else. The  authors are grateful to Prof. Hojoo~Lee (KIAS) for encouraging to complete and publish the present notes. }
\end{abstract}

\maketitle

\section{Introduction}

%\begin{figure}[h]
%\includegraphics[width=80mm]{Weiersstras03.png}
%\caption{A surface (\ref{eq:snsn}) for $k=4/5$ and its fundamental domain} \label{fig:snsn}
%\end{figure}

It is well known that any classical solution $u(x)=u(x_1,\ldots,x_n)$ of the following equation
\begin{equation}\label{mineq}
(1+|Du|^2)\Delta u-\sum_{i,j=1}^nu_{x_i} u_{x_j} u_{x_ix_j}=0,
\end{equation}
where $u_{x_i}=D_{x_i}u$, gives rise to a minimal (zero mean curvature) hypersurface
$$
x_{n+1}=u(x), \quad x\in \Omega\subset \R{n}.
$$

\subsection{Minimal surfaces with `harmonic level sets'}
In \cite{SergTk1998}, \cite{SergTk2000} and \cite{SergTk1999}, the authors studied and classified all zero mean curvature surfaces in the Euclidean and Minkowski spaces given implicitly by
\begin{equation}\label{Msurface}
\mathscr{M}=\{x\in \R{3}:\re h(z)=F(x_3)\},\qquad z=x_1+x_2 \sqrt{-1},
\end{equation}
where $h(z)$ is a holomorphic function of  one complex variable. These surfaces were referred to as minimal surfaces with `harmonic level sets' (\cite{SergTk1999}). More precisely, one has the following result in the Euclidean case.

\begin{theoremA}[\cite{SergTk1998}]
The surface $\mathscr{M}$ defined by (\ref{Msurface}) is minimal if and only if $h'(z)=1/g(z)$, where $g(z)$ satisfies
\begin{equation}\label{geqv}
g''(z)g(z)-g'^2(z)=c\in \R{}.
\end{equation}
In this case, the function  $F$ is found by $F''(t)+ {Y}(F(t))=0$, where $Y(t)$ is  well-defined by virtue of
$$
\frac{\re g'}{|g|^2}=-{Y}(\re h(z)).
$$
\end{theoremA}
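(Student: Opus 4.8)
The plan is to view $\mathscr M$ as the regular level set $\{\Phi=0\}$ of $\Phi(x)=u(x_1,x_2)-F(x_3)$ with $u=\re h$, and to use that such a level set has zero mean curvature exactly when
$$|\nabla\Phi|^2\,\Delta\Phi-\sum_{i,j=1}^{3}\Phi_{x_i}\Phi_{x_j}\Phi_{x_ix_j}=0 \qquad\text{on }\mathscr M,$$
which is \eqref{mineq} applied to the defining function. First I would record the elementary consequences of $u$ being harmonic and of holomorphic origin: $\Delta\Phi=-F''$, and $|\nabla\Phi|^2=|Du|^2+F'^2=|h'|^2+F'^2$ by the Cauchy--Riemann equations, while $\Phi_{x_ix_3}=0$ for $i=1,2$, so the Hessian term splits as $\sum\Phi_{x_i}\Phi_{x_j}\Phi_{x_ix_j}=Q(u)-F'^2F''$, where $Q(u):=\sum_{i,j=1}^2 u_{x_i}u_{x_j}u_{x_ix_j}$. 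A one-line computation using $u_{x_1x_1}=-u_{x_2x_2}$ identifies $Q(u)=\re\bigl(h''(z)\,(\overline{h'(z)})^{2}\bigr)$. Substituting, the two $F'^2F''$ terms cancel and the minimality condition collapses to the single scalar identity
$$|h'(z)|^2\,F''(x_3)=-\,\re\bigl(h''(z)\,(\overline{h'(z)})^{2}\bigr)\qquad\text{on }\mathscr M.$$

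Next I would put $g:=1/h'$, so that $h''=-g'/g^2$; then the right-hand side equals $\re g'/|g|^4$ and $|h'|^2=1/|g|^2$, so the identity becomes simply $F''(x_3)=\re g'(z)/|g(z)|^2$ whenever $\re h(z)=F(x_3)$. Since the left side depends on $x_3$ alone, this forces the function $z\mapsto\re g'(z)/|g(z)|^2$ to be constant along each level curve of $\re h$; writing that common value as $-Y(\re h(z))$ yields both the stated relation defining $Y$ and, read backwards, the ODE $F''+Y(F)=0$. Conversely, given such a $Y$ and a solution $F$ of this ODE (which exists by standard ODE theory on any interval where $Y$ is defined), reversing the computation shows $\mathscr M$ is minimal. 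Thus, modulo the determination of $F$, minimality of $\mathscr M$ is equivalent to the purely function-theoretic statement that $\re g'/|g|^2$ is constant on the level sets of $\re h$.

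Finally I would convert that statement into \eqref{geqv}. Where $g\neq 0,\infty$, the map $\zeta=h(z)=\xi+\I\eta$ is locally biholomorphic, so a real-valued $\psi(z)$ is constant on the level sets of $\re h=\xi$ iff $\partial_\eta\psi=0$; by the Wirtinger chain rule $\partial_\zeta=g\,\partial_z$, this is equivalent to $\im\bigl(g\,\psi_z\bigr)=0$. Carrying this out for $\psi=\re g'/|g|^2$ gives, after a short computation,
$$g\,\psi_z=\frac{g''(z)g(z)-g'(z)^2-|g'(z)|^2}{2\,|g(z)|^2},$$
so, since $|g'|^2$ and $|g|^2$ are real, the condition reads $\im\bigl(g''g-g'^2\bigr)=0$, i.e.\ the holomorphic function $g''g-g'^2$ is real-valued, hence constant: $g''g-g'^2=c\in\R{}$. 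Running the same chain of equivalences backwards gives the converse. I expect the main work to be the bookkeeping in the first paragraph (the cancellation reducing the minimal-surface equation to one scalar identity) together with the chain-rule computation of $g\,\psi_z$; the punchline making $c$ a genuine \emph{constant} rather than merely a real-valued function is the observation that a real-valued holomorphic function is constant.
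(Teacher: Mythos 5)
Your argument is correct, but it should be said up front that the paper itself contains no proof of Theorem~A --- the result is imported from \cite{SergTk1998} --- so your proposal is best compared with the proof of Proposition~\ref{pro:1}, which is the paper's higher-dimensional generalization of the same computation. Your first paragraph (writing $\mathscr M$ as the zero set of $\Phi=\re h-F$, using harmonicity to get $\Delta\Phi=-F''$, the Cauchy--Riemann identities to get $|\nabla\Phi|^2=|h'|^2+F'^2$ and $Q(u)=\re\bigl(h''\,\overline{h'}^{\,2}\bigr)$, and observing the cancellation of the $F'^2F''$ terms) is exactly the proof of Proposition~\ref{pro:1} specialized to $m=k=1$, where condition (\ref{defequa}) reduces to $|h'|^2F''=-\re\bigl(h''\,\overline{h'}^{\,2}\bigr)$ on $\mathscr M$ because $\Delta_1F\equiv 0$ in one variable. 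Your second and third paragraphs then supply what the paper omits entirely: the substitution $g=1/h'$ turning this into $F''=\re g'/|g|^2$ along $\{\re h=F\}$, and the reduction of ``$\re g'/|g|^2$ is constant on level sets of $\re h$'' to (\ref{geqv}) via the biholomorphic change of variable $\zeta=h(z)$ and the identity $g\,\psi_z=\bigl(g''g-g'^2-|g'|^2\bigr)/(2|g|^2)$, which I have checked and which correctly isolates the holomorphic function $g''g-g'^2$ as real-valued, hence constant. This is a clean and complete route, and it buys an actual proof of the quoted theorem rather than a reference.

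Two small caveats you should make explicit. First, in the ``only if'' direction the minimality identity only forces $\psi=\re g'/|g|^2$ to be constant on those level sets $\{\re h=s\}$ with $s$ in the range of $F$; you need $F$ non-constant (so that the range contains an interval and $\im(g\psi_z)=0$ holds on an open set, whence everywhere by analyticity) --- for constant $F$ the implication genuinely fails, e.g.\ $h=z^2$, $F\equiv 0$. Second, $\im(g\psi_z)=0$ gives constancy of $\psi$ on \emph{connected components} of level sets, and $g''g-g'^2$ is constant on a \emph{connected} domain; the global well-definedness of $Y$ across components is an extra (true for the solutions (a)--(c), but not literally delivered by your local argument). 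Neither point undermines the proof; both are implicit in how the theorem is stated.
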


\medskip
\textbf{Remark.} Notice that the resulting surface $M$, if non-empty, is automatically embedded because
$$
|\nabla f(x)|^2=F'^2(x_1)+\frac{1}{|g(z)|^2}>0,
$$
where $f(x)=F(x_1)-\re h(z)$.

A simple analysis reveals that the only possible solutions of (\ref{geqv}) are
\begin{itemize}
\item[(a)] $g(z)=az+b$,
\item[(b)] $g(z)=ae^{bz}$, and
\item[(c)] $g(z)=a \sin (bz+c)$, $a^2, b^2\in \R{}$ and $c\in \mathbb{C}$
\end{itemize}
which, in particular, yields:
\smallskip

%\fontsize{5}{7.2}\selectfont
\begin{table}

\begin{tabular}{p{0.24\linewidth}p{0.24\linewidth}p{0.24\linewidth}p{0.24\linewidth}}
\includegraphics[width=\linewidth]{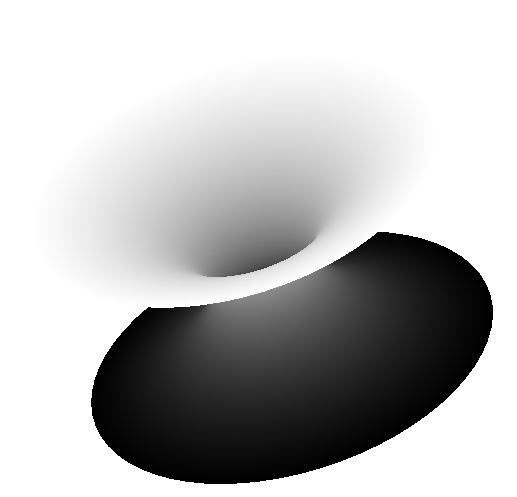}&
\includegraphics[width=\linewidth]{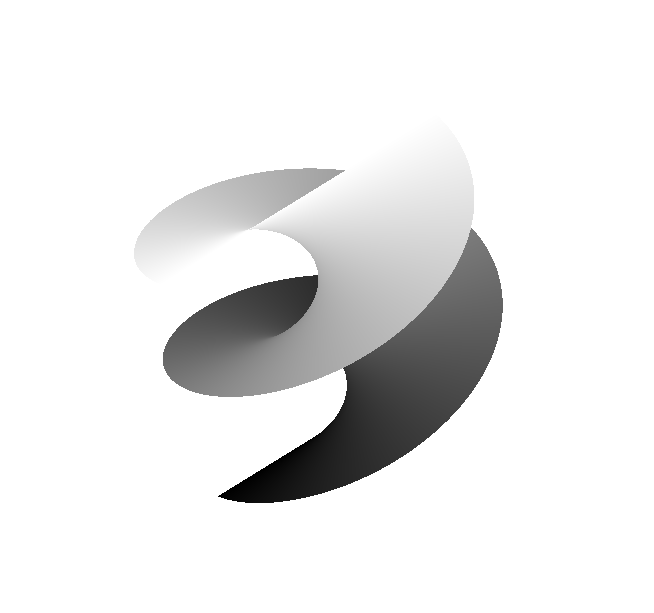}&
\includegraphics[width=\linewidth]{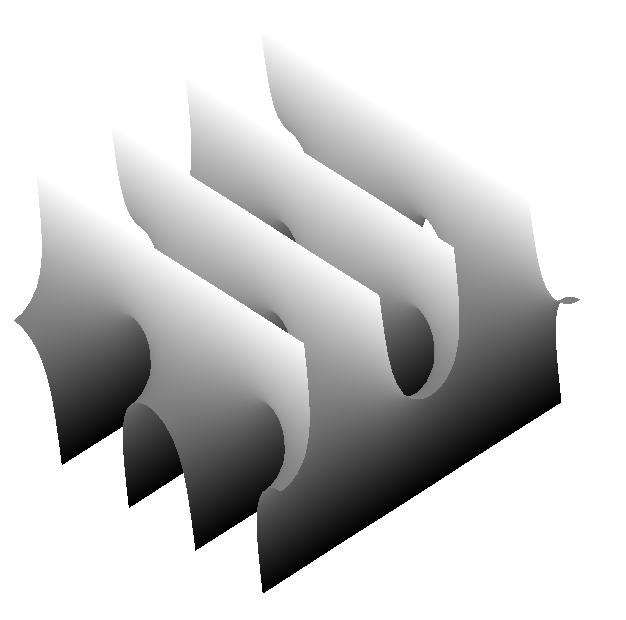}&
\includegraphics[width=\linewidth]{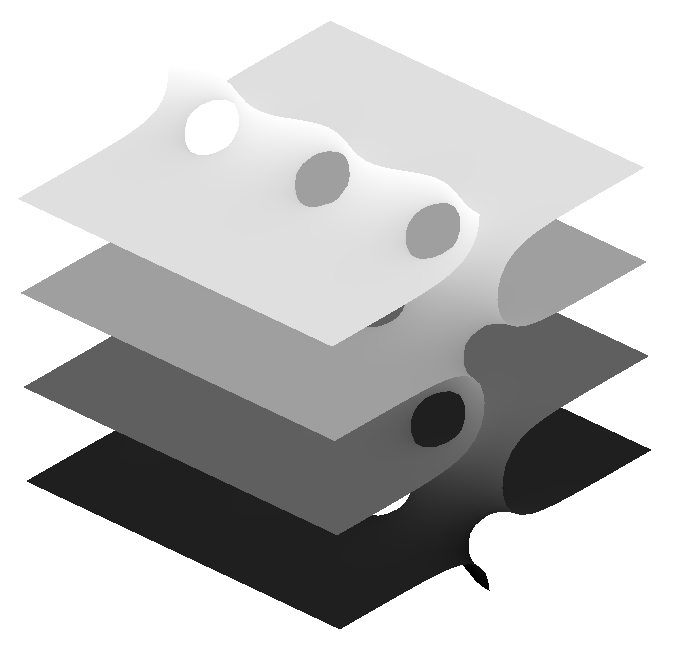}
\\
\\
the catenoid&
the helicoid &
a Scherk type surface &
a doubly periodic\\
&&&surface\\
$g(z)=z$& $g(z)=\I z$ & $g(z)=e^z$ & $g(z)=\sin z$ \\
%$F(t)=\ln \cosh t$ &$F(t)=t$ & $F(t)=\cos t$ & \\
$h(z)=\ln z$& $h(z)=-\I \ln z$ & $h(z)=-e^{-z}$ & $h(w)=-\ln \tanh \frac{z}{2}$\\
\\
%&&&\\
%\multicolumn{4}{c}{The defining equations}\\
%&&&\\
$x_1^2+x_2^2=\cosh^2 x_3$&
$\frac{x_2}{x_1}=\tan x_3$&
$\exp x_3=\frac{\cos x_2}{\cos x_1}$&
$\cn (\frac{k x_3}{k'},k)=\frac{\sin x_2}{\sinh x_1}$
\end{tabular}
\medskip
\caption{Some partial solutions}
\end{table}

\section{The higherdimensional case}
Let $f=f(\xi)$ be of class $C^2$ in an open set of $\R{N}$. It is well known (see, for instance, \cite{Hsiang67}) that the non-singular zero-locus
$$
\mathscr{M}_f:=f^{-1}(0)\cap \{\xi:|D f(\xi)|\ne0\}
$$
is a  minimal hypersurface in $\R{N}$ if and only if
\begin{equation}\label{minimalcond}
\Delta_1 f(\xi)=0 \mod f(\xi).
\end{equation}
Here
$$
\Delta_1f:=|Df|^2\Delta f-\sum_{i,j=1}^N f'_{\xi_i} f'_{\xi_j}f'' _{\xi_i\xi_j}
$$
is the mean curvature operator (in fact, the $1$-Laplace operator) and the congruence
$$
a(\xi)\equiv b(\xi) \mod c(\xi)
$$
is understood in the sense that $c(\xi)=0$ implies $a(\xi)-b(\xi)=0$. Notice that the hypersurface is automatically embedded as a level set.

Below, we study a generalization of (\ref{Msurface}), i.e. the solutions $f(\xi)$ of (\ref{minimalcond})  having the form
\begin{equation}\label{fedf}
f(\xi)=\re h(z)-F(t), \quad \xi=(z,t)\in \mathbb{C}^m\times \R{k}=\R{2m+k},
\end{equation}
where $h(z)$ is a holomorphic function and $F(t)$ is a function of class $C^2$. More precisely, recall that a function $h(z)=h(z_1,\ldots,z_k):\Omega\to \mathbb{C}$ is called holomorphic in $\Omega\subset \mathbb{C}^n$, or $h\in \mathcal{O}(\Omega)$, if $D_{\bar z_k} h(z)= 0$ in $\Omega$ for any $k=1,\ldots,m $, where
$$
D_{z_k}=\half12(D_{x_k}-\sqrt{-1}D_{y_k}),\qquad
D_{\bar z_k}=\half12(D_{x_k}+\sqrt{-1}D_{y_k})
$$
and $z_k=x_k+\sqrt{-1}y_k\in \mathbb{C}$, $k=1,\ldots,m$.
The reader easily verifies that the following fact holds true.

\begin{proposition}\label{pro:lem}
Let $h=h(z)\in \mathcal{O}(\Omega)$. Then
\begin{equation}\label{ident1}
\begin{split}
D_{x_k}\re h &=\phantom{-}D_{y_k}\im h =\re  h'_{z_k} \\
D_{x_k}\im h &=-D_{y_k}\re h =\im  h'_{z_k}
\end{split}
\end{equation}
and
\begin{equation}\label{ident2}
\begin{split}
D^2_{x_ix_j}\re h &=D^2_{x_iy_j}\im h =-D^2_{y_iy_j}\im h =\re h''_{z_iz_j}\\
D^2_{x_ix_j}\im h &=-D^2_{x_iy_j}\re h =-D^2_{y_iy_j}\im h =\im h''_{z_iz_j}\\
\end{split}
\end{equation}
\end{proposition}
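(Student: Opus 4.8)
The plan is to reduce everything to the scalar Cauchy--Riemann relations in each pair of variables $(x_k,y_k)$ and then bootstrap from first to second order. Write $u=\re h$ and $v=\im h$, so $h=u+\sqrt{-1}\,v$. The assumption $h\in\mathcal{O}(\Omega)$ says $D_{\bar z_k}h=0$ for every $k$; separating real and imaginary parts in
$$
0=D_{\bar z_k}h=\frac12\bigl(D_{x_k}+\sqrt{-1}\,D_{y_k}\bigr)(u+\sqrt{-1}\,v)
$$
yields the system $D_{x_k}u=D_{y_k}v$ and $D_{y_k}u=-D_{x_k}v$ for each $k=1,\dots,m$.

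To obtain (\ref{ident1}), note that $D_{z_k}+D_{\bar z_k}=D_{x_k}$, so $h'_{z_k}=D_{z_k}h=D_{x_k}h=D_{x_k}u+\sqrt{-1}\,D_{x_k}v$; hence $\re h'_{z_k}=D_{x_k}u$ and $\im h'_{z_k}=D_{x_k}v$. Feeding in the two Cauchy--Riemann identities rewrites these as $\re h'_{z_k}=D_{y_k}v$ and $\im h'_{z_k}=-D_{y_k}u$, which together are precisely (\ref{ident1}).

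For (\ref{ident2}) I would iterate (\ref{ident1}). The key point is that $g:=h'_{z_j}$ is again holomorphic, since $D_{\bar z_i}g=D_{\bar z_i}D_{z_j}h=D_{z_j}D_{\bar z_i}h=0$ (the constant-coefficient operators commute). Thus (\ref{ident1}) applies to $g$ with index $i$: for instance $D_{x_i}\re g=\re g'_{z_i}$ and $D_{y_i}\re g=-\im g'_{z_i}$, and similarly for $\im g$. Now combine this with the first-order expressions $\re g=D_{x_j}u=D_{y_j}v$ and $\im g=D_{x_j}v=-D_{y_j}u$ coming from (\ref{ident1}), and use $g'_{z_i}=h''_{z_iz_j}$ together with the equality of mixed real partials (equivalently $h''_{z_iz_j}=h''_{z_jz_i}$). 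Each equality in (\ref{ident2}) is then read off by choosing which representative of $\re g$ (or $\im g$) to differentiate and in which variable.

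I do not anticipate a genuine obstacle here: the statement is bookkeeping with real and imaginary parts. The only points worth making explicit are that differentiation preserves holomorphy (so the first-order identities may be iterated) and that one must track carefully the signs introduced by the factor $\sqrt{-1}$ and by the commutation of second derivatives.
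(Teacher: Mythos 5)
Your proof is correct and complete in all essentials; the paper itself offers no argument here (it merely says the reader "easily verifies" the fact), so your write-up actually supplies the missing verification. The two key observations you isolate --- that $D_{\bar z_k}h=0$ encodes the Cauchy--Riemann system for $u=\re h$, $v=\im h$, and that $h'_{z_j}$ is again holomorphic so the first-order identities can be iterated --- are exactly what is needed, and the sign bookkeeping in your first-order step checks out. One thing worth adding: carrying your computation through to the end shows that the third expression in the first line of (\ref{ident2}) is a typo in the paper; one finds $D^2_{y_iy_j}\im h=-\im h''_{z_iz_j}$, so the term $-D^2_{y_iy_j}\im h$ equals $\im h''_{z_iz_j}$, not $\re h''_{z_iz_j}$, and the correct entry there is $-D^2_{y_iy_j}\re h$ (this corrected form is the one actually used later in the proof of Proposition~\ref{pro:1}, where $f''_{x_ix_j}=-f''_{y_iy_j}=\re h''_{z_iz_j}$). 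Your appeal to $h''_{z_iz_j}=h''_{z_jz_i}$ is harmless but not needed if you simply read each second derivative as $D_{x_i}$ or $D_{y_i}$ applied to the appropriate representative of $\re g$ or $\im g$ with $g=h'_{z_j}$.
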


It follows from (\ref{ident1}) that
$$
|D_zh|^2:=\sum_{i=1}^m |h'_{z_i}|^2=|D_{(x,y)}\re h|^2=|D_{(x,y)}\im h|^2.
$$

\begin{proposition}
\label{pro:1}
Let $f$ is defined by (\ref{fedf}) and $\mathscr{M}_f\ne \emptyset$. The  surface $\mathscr{M}_f$ is an (embedded) minimal hypersurface if and only if
\begin{equation}\label{defequa}
\re\sum_{i,j=1}^{m} \overline{h_{z_iz_j}''}h'_{z_i}h'_{z_j}\equiv -|D_z h|^2\Delta F -\Delta_1 F \mod ( F-\re h)
\end{equation}
holds whenever
\begin{equation}\label{implicit}
|D_\xi f|^2=\sum_{i=1}^m |h_{z_k}'|^2+|D_t F|^2\ne0.
\end{equation}
\end{proposition}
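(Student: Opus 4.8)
The plan is to substitute $f(\xi)=\re h(z)-F(t)$ directly into the mean curvature operator $\Delta_1 f$ and reorganize the result into the claimed congruence. First I would compute the gradient: by Proposition~\ref{pro:lem}, the $z$-part of $Df$ consists of the real and imaginary parts of $h'_{z_k}$ (arranged over the $x_k,y_k$ coordinates), and the $t$-part is $-D_tF$; hence $|Df|^2=|D_zh|^2+|D_tF|^2$, which is exactly the non-degeneracy quantity in \eqref{implicit}. Next I would assemble the Hessian of $f$, which is block-diagonal: a $2m\times 2m$ block coming from the second derivatives of $\re h$ (expressible through $\re h''_{z_iz_j}$ and $\im h''_{z_iz_j}$ via \eqref{ident2}) and a $k\times k$ block $-D^2F$, with the two blocks decoupled since $h$ depends only on $z$ and $F$ only on $t$.

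The main computation is then to expand
\[
\Delta_1 f=|Df|^2\Delta f-\sum_{i,j} f'_{\xi_i}f'_{\xi_j}f''_{\xi_i\xi_j}
\]
and split every term according to the $z$/$t$ decomposition. For the Laplacian, $\Delta f=\Delta_z\re h-\Delta_tF=-\Delta_tF$ since $\re h$ is harmonic in the $(x,y)$ variables. For the quadratic form in the second sum, the cross terms (one derivative in $z$, one in $t$) vanish because the Hessian block is zero there, so only the pure $z$-block and the pure $t$-block survive. The pure $t$-block contributes $\sum_{i,j}F'_{t_i}F'_{t_j}F''_{t_it_j}$, i.e. the piece completing $\Delta_1F$. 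The pure $z$-block is the one requiring the identities \eqref{ident1}--\eqref{ident2}: writing out $\sum(D_{x_i}\re h)(D_{x_j}\re h)(D^2_{x_ix_j}\re h)$ plus the analogous $x_iy_j$, $y_iy_j$, and $y_iy_j$ sums and using that $D_{x_k}\re h=\re h'_{z_k}$, $D_{y_k}\re h=-\im h'_{z_k}$, $D^2_{x_ix_j}\re h=\re h''_{z_iz_j}$, etc., the whole block should collapse — after separating real and imaginary parts and cancelling the terms that pair $\re$ with $\im$ — to $\re\sum_{i,j}\overline{h''_{z_iz_j}}\,h'_{z_i}h'_{z_j}$. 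This bookkeeping, keeping track of signs and of which products of $\re$'s and $\im$'s cancel against which, is the step I expect to be the main obstacle; it is purely mechanical but error-prone.

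Collecting everything gives
\[
\Delta_1 f=-|D_zh|^2\,\Delta_tF-\Delta_1F-\re\sum_{i,j=1}^m\overline{h''_{z_iz_j}}\,h'_{z_i}h'_{z_j},
\]
an identity valid at every point (not just on $\mathscr{M}_f$). By the criterion \eqref{minimalcond}, $\mathscr{M}_f$ is a minimal hypersurface exactly when $\Delta_1 f\equiv 0\bmod f$, i.e. when $\Delta_1f=0$ at all points where $f=\re h-F=0$ and $|Df|\ne0$; substituting the boxed expression for $\Delta_1f$ and moving terms across yields precisely \eqref{defequa} under the hypothesis \eqref{implicit}. Finally I would note that embeddedness is automatic since $\mathscr{M}_f$ is defined as a regular level set, as already observed after \eqref{minimalcond}. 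I would double-check the argument on the case $m=1$, $k=1$, where it must reduce to Theorem~A (with $\re h''=\re h'^2/g$-type relations), to catch any sign slip.
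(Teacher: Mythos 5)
Your proposal is correct and follows essentially the same route as the paper: compute the gradient and block-diagonal Hessian of $f=\re h-F$ via Proposition~\ref{pro:lem}, use harmonicity of $\re h$ to get $\Delta f=-\Delta F$, collapse the pure $z$-block of the quadratic form to $\re\sum_{i,j}\overline{h''_{z_iz_j}}h'_{z_i}h'_{z_j}$, and read off \eqref{defequa} from the criterion \eqref{minimalcond}. The pointwise identity you box is exactly the one the paper derives (with the opposite overall sign convention), and the ``error-prone bookkeeping'' step you flag does check out.
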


\begin{proof}
Under the non-singularity assumption (\ref{implicit}), it suffices to show that (\ref{minimalcond}) is equivalent to (\ref{defequa}). To this end, we notice that by Proposition~\ref{pro:lem}
$$
f'_{x_i}=\re h'_{z_i}, \quad
f'_{y_j}=-\im h'_{z_i}, \quad
f'_{t_k}=-F_{t_k}.
$$
Further, $f''_{x_it_k}=f''_{y_jt_k}=0$, $f''_{t_it_j}=-F''_{t_it_j}$ and
$$
f''_{x_ix_j}=-f''_{y_iy_j}=\re h''_{z_iz_j}, \qquad
f''_{x_iy_j}=f''_{x_jy_i}=-\im h''_{z_iz_j}.
$$
This readily yields
\begin{equation*}
\begin{split}
\sum_{i,j=1}^{2m+k}f'_{x_i}f'_{x_j}f''_{x_ix_j}&=
\re\sum_{i,j=1}^{m} \overline{h_{z_iz_j}''}h'_{z_i}h'_{z_j}- \sum_{i,j=1}^{k} F'_{t_i}F'_{t_j}F''_{t_it_j}
\end{split}
\end{equation*}
hence
\begin{equation*}
\begin{split}
-\Delta_1 f&=\bigl(\sum_{k=1}^m |h_{z_k}'|^2+|DF|^2)\Delta F+
\re\sum_{i,j=1}^{m} \overline{h_{z_iz_j}''}h'_{z_i}h'_{z_j}- \sum_{i,j=1}^{k} F'_{t_i}F'_{t_j}F''_{t_it_j}\\
&=|D_z h|^2\Delta F+\Delta_1 F+\re\sum_{i,j=1}^{m} \overline{h_{z_iz_j}''}h'_{z_i}h'_{z_j},\\
\end{split}
\end{equation*}
and the desired property follows.\end{proof}

\section{Applications}

\subsection{The class $\mathscr{T}^m$}
Here we show some examples  how Proposition~\ref{pro:1} can be used to construct minimal hypersurfaces in $\R{2m}$.

\begin{definition}
We say that a holomorphic function $h(z_1,\ldots,z_m)$ is $\R{}$-holomporphic equivalently write $h\in \mathscr{T}^m$, if there exists  a real valued function $\mu:\mathbb{C}^m\to \R{}$ such that the relation
\begin{equation}\label{muh}
\sum_{i,j=1}^{m} \overline{h_{z_iz_j}''}h'_{z_i}h'_{z_j}=\mu(z) {h(z)}
\end{equation}
holds everywhere in the domain of holomorphy  of $h$.
\end{definition}

It follows from the standard theory that if a function $h$ is $\R{}$-holomporphic  in some open set then it is $\R{}$-holomporphic everywhere in the domain of holomorphy.   The importance of the introduced class follows from the proposition below.
\begin{proposition}
\label{pro:lem2}
Let $h(z)\in \mathscr{T}^m$. Then
$$
\mathscr{M}_{\re h}=\{z\in \mathbb{C}^m=\R{2m}:\,\,\re h(z)=0, |D h(z)|\ne0\}
$$
is an embedded minimal hypersurface.
\end{proposition}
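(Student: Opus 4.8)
The plan is to deduce this directly from Proposition~\ref{pro:1}, specialized to the degenerate case in which there are no $t$-variables (i.e.\ $k=0$) and $F\equiv 0$, so that $f(\xi)=\re h(z)$ on $\R{2m}$. With this choice every term involving $F$ in~(\ref{defequa}) drops out ($\Delta F=0$, $\Delta_1 F=0$, $|D_t F|^2=0$), and the non-singularity condition~(\ref{implicit}) collapses to $|D_z h|^2\ne0$, which by the identity $|D_z h|^2=|D_{(x,y)}\re h|^2$ recorded right after Proposition~\ref{pro:lem} is precisely the requirement $|D h(z)|\ne 0$ appearing in the definition of $\mathscr{M}_{\re h}$. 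Thus $\mathscr{M}_{\re h}$ is exactly the non-singular zero locus of the $C^2$ function $\re h$ and, as such, is automatically embedded; it remains only to check minimality.

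By Proposition~\ref{pro:1}, minimality is then equivalent to
$$
\re\sum_{i,j=1}^{m}\overline{h''_{z_iz_j}}\,h'_{z_i}h'_{z_j}\equiv 0 \mod \re h,
$$
i.e.\ the left-hand side must vanish wherever $\re h(z)=0$. This is exactly where the hypothesis $h\in\mathscr{T}^m$ is used: by~(\ref{muh}) there is a \emph{real-valued} $\mu$ with $\sum_{i,j}\overline{h''_{z_iz_j}}h'_{z_i}h'_{z_j}=\mu(z)h(z)$, so taking real parts and exploiting that $\mu$ is real gives
$$
\re\sum_{i,j=1}^{m}\overline{h''_{z_iz_j}}\,h'_{z_i}h'_{z_j}=\mu(z)\,\re h(z),
$$
which manifestly vanishes on $\{\re h=0\}$. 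That is precisely the required congruence, so the proof is complete.

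Two small points deserve care. First, the real-valuedness of $\mu$ is essential rather than cosmetic: for a general complex $\mu$ one would only obtain $\re(\mu h)=(\re\mu)(\re h)-(\im\mu)(\im h)$, whose second term need not vanish on $\{\re h=0\}$, and the congruence would fail — so the definition of $\mathscr{T}^m$ is exactly engineered so that the ``$\mod \re h$'' in~(\ref{defequa}) holds for free. Second, the bookkeeping identity $|Dh(z)|=|D_{(x,y)}\re h(z)|$, which ensures that the set excised in the definition of $\mathscr{M}_{\re h}$ coincides with the singular locus removed in Proposition~\ref{pro:1}, follows immediately from Proposition~\ref{pro:lem}. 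Beyond these two observations there is no genuine obstacle: the statement is a clean corollary of Proposition~\ref{pro:1}.
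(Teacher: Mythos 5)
Your proposal is correct and follows essentially the same route as the paper: apply Proposition~\ref{pro:1} with $F\equiv 0$ and use the real-valuedness of $\mu$ to convert (\ref{muh}) into $\re\sum\overline{h''_{z_iz_j}}h'_{z_i}h'_{z_j}=\mu\,\re h$, which gives the congruence (\ref{defequa}). Your additional remarks on why $\mu$ must be real and on the identification of the non-singular locus are sound elaborations of what the paper leaves implicit.
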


\begin{proof}
It follows from (\ref{muh}) that
$$
\re\sum_{i,j=1}^{m} \overline{h_{z_iz_j}''}h'_{z_i}h'_{z_j}=\mu(z) \re {h(z)},
$$
hence applying  Proposition~\ref{pro:1} to $h(z)$ and $F(z)\equiv 0$ yields by (\ref{defequa}) the required conclusion.
\end{proof}

Though the  case $m=1$ is trivial (to get a non-trivial minimal hypersurface one need to have at least $m=2$), it still is very useful for the further constructions. We have the following complete classification of $\mathscr{T}^1$.

\begin{proposition}
\label{pro:m}
Any element of $\mathscr{T}^1$ is either a binomial $h(z)=(az+b)^p$ or the exponential $h(z)=e^{pz}$, where  $a,b\in \mathbb{C}$ and $p\in \R{}$.
\end{proposition}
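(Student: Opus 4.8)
The plan is to translate the defining relation \eqref{muh} for $m=1$ into an ODE for $h$ and integrate it. For a single variable, \eqref{muh} reads $\overline{h''(z)}\,h'(z)^2=\mu(z)h(z)$ with $\mu$ real-valued. First I would dispose of the degenerate cases: if $h'\equiv 0$ then $h$ is constant (not interesting, and $\mathscr{M}_{\re h}$ is empty or all of $\R 2$), and if $h''\equiv 0$ then $h(z)=az+b$, which is the binomial case with $p=1$. So assume $h'$ and $h''$ are not identically zero; by the identity theorem they vanish only on a discrete set, and on the complement we may write
\begin{equation*}
\mu(z)=\frac{\overline{h''(z)}\,h'(z)^2}{h(z)}.
\end{equation*}

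The key step is to exploit that $\mu$ is \emph{real}: $\mu=\bar\mu$ gives
\begin{equation*}
\frac{\overline{h''}\,h'^2}{h}=\frac{h''\,\overline{h'}^2}{\bar h},
\end{equation*}
i.e. $\bar h\,\overline{h''}\,h'^2=h\,h''\,\overline{h'}^2$. Rearranging, $\dfrac{h''h}{h'^2}=\dfrac{\overline{h''}\,\overline{h}}{\overline{h'}^2}=\overline{\left(\dfrac{h''h}{h'^2}\right)}$, so the meromorphic function
\begin{equation*}
\Phi(z):=\frac{h''(z)\,h(z)}{h'(z)^2}
\end{equation*}
takes real values on an open set, hence is a real constant, say $\Phi\equiv c\in\R{}$. (Here I would note $\Phi$ is meromorphic since $h'$ is not identically zero, and that a nonconstant meromorphic function is an open map, so real-valuedness on any set with a limit point forces it to be constant.)

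Now I just need to solve the ODE $h''h=c\,h'^2$. This is separable: writing $w=h'$, we get $\dfrac{w'}{w}=c\,\dfrac{h'}{h}$, so $\ln w=c\ln h+\text{const}$, giving $h'=\alpha\,h^{c}$ for a constant $\alpha$. If $c=1$ this yields $h=e^{pz}$ up to an affine change $z\mapsto pz+\text{const}$ absorbed into the exponent (and note $p$ must be real for $\Phi$ to have come out real — I would double-check the reality constraint propagates correctly, or simply observe that $e^{pz}$ with complex $p$ is, after the substitution $w=pz$, the same family, and re-examining \eqref{muh} directly pins down $p\in\R{}$). If $c\ne 1$, integrating $h^{-c}h'=\alpha$ gives $h^{1-c}=\alpha(1-c)z+\beta$, i.e. $h(z)=(az+b)^{p}$ with $p=1/(1-c)$; feeding this back into \eqref{muh} and demanding $\mu$ real shows $p\in\R{}$. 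Conversely one checks directly that both families lie in $\mathscr{T}^1$.

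The only real subtlety — the "main obstacle" — is the bookkeeping around branch points and the reality of the exponent $p$: $(az+b)^p$ and $e^{pz}$ are genuinely multivalued or require a slit domain when $p\notin\Z$, so I would phrase the conclusion as holding on the domain of holomorphy (a simply connected slit region) and verify that the reality of $\mu$ in \eqref{muh}, not merely the ODE, is what forces $a,b\in\mathbb{C}$ arbitrary but $p\in\R{}$; the ODE alone would allow complex $c$ and hence complex $p$, so the real-valuedness hypothesis on $\mu$ must be used twice — once to get $\Phi\equiv c$ constant, and once more (or via a direct substitution) to get $c$, equivalently $p$, real.
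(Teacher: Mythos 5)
Your proposal is correct and follows essentially the same route as the paper: both arguments show that the meromorphic function $h''h/h'^2$ must be a real constant $c$ (the paper by equating it to the real-valued quantity $|h''|^2/\mu$, you by writing out $\mu=\bar\mu$ directly) and then integrate $h'=\alpha h^{c}$ to land in the binomial or exponential family. Your extra care about branch domains and the reality of the exponent goes slightly beyond the paper's terse treatment but does not change the method.
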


\begin{proof}
Indeed, let $\Omega$ be the domain of holomoprhy of $h(z)$. Then (\ref{muh}) yields
$$
\frac{|h''(z)|^2}{\mu(z)}=\frac{h''(z)h(z)}{h'^2(z)},
$$
where the right hand side is a meromorphic function in $\Omega$, while the left hand side is real-valued in $\Omega$. Thus, the both sides are  constant in $\Omega$, say equal to $c\in \R{}$. This yields $ch'^2(z)=h''(z)h(z)$, or $h'(z)=ch(z)^b$ for some real $b$. This  yields the required conclusions.
\end{proof}

The following proposition shows that  $\R{}$-holomporphic functions have a nice multiplicative structure.

\begin{proposition}\label{pro:4}
Let $h(z)\in \mathscr{T}^{m}$ and $g(w)\in \mathscr{T}^{n}$. Then
\begin{itemize}
\item[(i)]
$ch(z)^r\in \mathscr{T}^{n}$ for any $c\in \mathbb{C}$ and $r\in \R{}$;

\item[(ii)]
$h(z)g(w)\in \mathscr{T}^{m+n}$.

\item[(iii)]
$h(z)/g(w)\in \mathscr{T}^{m+n}$.
\end{itemize}
\end{proposition}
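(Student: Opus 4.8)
The plan is to verify the defining relation \eqref{muh} for each of the three constructions directly, exploiting the fact that the key quadratic form
$$
Q[h](z):=\sum_{i,j=1}^m \overline{h''_{z_iz_j}}\,h'_{z_i}h'_{z_j}
$$
transforms in a controlled way under the operations in question. For part (i), with $H(z)=ch(z)^r$ I would compute $H'_{z_i}=crh^{r-1}h'_{z_i}$ and $H''_{z_iz_j}=cr(r-1)h^{r-2}h'_{z_i}h'_{z_j}+crh^{r-1}h''_{z_iz_j}$, substitute into $Q[H]$, and factor out the common scalar $\overline{(cr)}\,(cr)^2 |h|^{2(r-2)}\cdot$ (powers of $h$); using $Q[h]=\mu(z)h(z)$ one finds $Q[H]=\tilde\mu(z)H(z)$ with an explicit real $\tilde\mu$ involving $\mu$, $|h|$, $|D_zh|^2$ and the real part of $h$ — all manifestly real. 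So (i) reduces to bookkeeping.

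For part (ii), set $F(z,w)=h(z)g(w)$ on $\mathbb C^{m+n}$. Because the $z$-variables and $w$-variables are independent, the Hessian of $F$ is block-structured: $F'_{z_i}=g(w)h'_{z_i}$, $F'_{w_\alpha}=h(z)g'_{w_\alpha}$, $F''_{z_iz_j}=g(w)h''_{z_iz_j}$, $F''_{w_\alpha w_\beta}=h(z)g''_{w_\alpha w_\beta}$, and the mixed block $F''_{z_iw_\alpha}=h'_{z_i}g'_{w_\alpha}$. Plugging into the full sum $Q[F]=\sum \overline{F''}F'F'$ and splitting it into the pure-$z$, pure-$w$, and mixed contributions, the pure blocks give $|g|^2\,\overline{g}\,Q[h]$ and $|h|^2\,\overline{h}\,Q[g]$, while the mixed terms (which occur with a combinatorial factor $2$) collapse to $\overline{h'_{z_i}g'_{w_\alpha}}\,(g h'_{z_i})(h g'_{w_\alpha})$ summed, i.e. $2\,\overline{h}\,\overline{g}\,h\,g\,|D_zh|^2|D_wg|^2$ — wait, more precisely $2\,(\overline{h}g)(\overline{g}h)\cdot$ nothing, let me just say it is $2|h|^2|g|^2|D_zh|^2|D_wg|^2$ after using $\sum_i \overline{h'_{z_i}}h'_{z_i}=|D_zh|^2$. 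Using $Q[h]=\mu h$ and $Q[g]=\nu g$ one obtains
$$
Q[F]=\bigl(|g|^2\,\mu(z)+|h|^2\,\nu(w)+2|h|^2|g|^2\cdot\tfrac{|D_zh|^2|D_wg|^2}{|h|^2|g|^2}\bigr)\,h(z)g(w),
$$
so $Q[F]=\widetilde\mu(z,w)F(z,w)$ with $\widetilde\mu$ a real-valued function; hence $F\in\mathscr T^{m+n}$. (The exact form of $\widetilde\mu$ is not needed — only that it is real.) Part (iii) follows by combining (i) and (ii): $1/g(w)=g(w)^{-1}\in\mathscr T^n$ by (i) with $r=-1$, and then $h(z)/g(w)=h(z)\cdot g(w)^{-1}\in\mathscr T^{m+n}$ by (ii).

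I expect the only real obstacle to be the mixed-derivative block in part (ii): one must be careful that the cross terms in $\sum_{i,j}\overline{F''_{\cdot\cdot}}F'_\cdot F'_\cdot$ are correctly counted (each unordered mixed pair contributes twice, and $F''_{z_iw_\alpha}=h'_{z_i}g'_{w_\alpha}$ is not zero), and that after summation everything is genuinely a multiple of $F=hg$ with a \emph{real} coefficient — the latter being exactly what membership in $\mathscr T^{m+n}$ demands. Once the mixed block is handled, (i) is a one-variable-style chain-rule computation and (iii) is immediate, so I would present (ii) in full and treat (i) and (iii) briefly.
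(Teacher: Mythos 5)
Your proposal is correct and follows essentially the same route as the paper: a direct computation of the quadratic form $Q[\cdot]$ for each construction, with the block decomposition (pure-$z$, pure-$w$, mixed with multiplicity $2$) in (ii) and the reduction of (iii) to (i) with $r=-1$ combined with (ii). The only blemishes are cosmetic: the mixed block actually contributes $2|D_zh|^2|D_wg|^2\,hg$ (not $2|h|^2|g|^2|D_zh|^2|D_wg|^2$, though your displayed formula self-corrects this), and the real multiplier in (i) involves only $|h|$ and $|D_zh|^2$, not $\re h$.
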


\begin{proof}
Setting $H(z):=h(z)^r$ one easily verifies that
$$
\sum_{i,j=1}^{m} \overline{H_{z_iz_j}''}H'_{z_i}H'_{z_j}=
r^3\sum_{i,j=1}^{m} ((r-1) |h|^{2r-4}|D_zh|^4+\mu |h|^{2r-2})h^r=\mu_1 H,
$$
with $\mu_1=r^3\sum_{i,j=1}^{m} ((r-1) |h|^{2r-4}|D_zh|^4+\mu |h|^{2r-2})$, obviously a real-valued function, thus yielding $h^r\in \mathscr{T}^{m}$. Similarly one justifies $ch\in \mathscr{T}^{m}$ which yields (i).

Further, we have
$$
\sum_{i,j=1}^{m} \overline{h_{z_iz_j}''}h'_{z_i}h'_{z_j}=\mu h, \qquad
\sum_{\alpha,\beta=1}^{n} \overline{g_{w_{\alpha}w_{\beta}}''}g'_{w_\alpha}g'_{w_\beta}=\nu g,
$$
where $\mu(z)$ and $\nu(w)$ are real-valued functions. Therefore, setting $H(z,w):=h(z)g(w)$ one obtains
\begin{equation*}
\begin{split}
\sum_{k,l=1}^{m+n} \overline{H_{z_iz_j}''}H'_{z_i}H'_{z_j}&=
|g|^2g\sum_{i,j=1}^{m} \overline{h_{z_iz_j}''}h'_{z_i}h'_{z_j}+
|h|^2h\sum_{\alpha,\beta=1}^{n} \overline{g_{w_{\alpha}w_{\beta}}''}g'_{w_\alpha}g'_{w_\beta}+2|D_zh|^2|D_w g|^2\,hg\\
&=(\mu |g|^2+\nu|h|^2+2|D_zh|^2|D_w g|^2)H,
\end{split}
\end{equation*}
yielding (ii). Finally, setting $r=-1$ and $c=1$ in (i) implies that $1/h(z)\in \mathscr{T}^n$, thus together with (ii) implies (iii).
\end{proof}

\subsection{Examples}
\begin{example}\label{ex:0}
A trivial example of a $\R{}$-holomorphic function is a linear function $h(z_1,\ldots,z_m)$, where $\mu\equiv 0$. The corresponding minimal hypersurface is a hyperplane in $\R{2m}$.
Another simple example of a $\R{}$-holomorphic function is (by Proposition~\ref{pro:m}) the function $h(z_1)=e^{z_1}$. It can be used to construct a highly non-trivial examples as Corollary~\ref{cor:1} below shows.
\end{example}

\begin{example}\label{ex:1}
A less trivial example is  and the quadratic form
$$
h(z_1,\ldots,z_m)=z_1^2+\ldots+z_m^2
$$
which satisfies (\ref{muh}) with $\mu= 8$. The corresponding minimal hypersurface is the Clifford cone
$$
\re h(z)=x_1^2+\ldots +x_m^2-y_1^2-\ldots-y_m^2=0.
$$
Observe that the non-singularity condition
$$
|D_zh|^2=4(|z_1|^2+\ldots+|z_m|^2)\ne 0
$$
holds everywhere outside the origin of $\R{2m}$.
\end{example}

\begin{example}\label{ex:2}
Another inteersting example is the cubic form
$$
h(z)=\det \left(
            \begin{array}{ccc}
              z_1 & z_2 & z_3 \\
              z_4 & z_5 & z_6 \\
              z_7 & z_8 & z_9 \\
            \end{array}
          \right)
          =
          z_{1} z_{5} z_{9}-z_{1} z_{8} z_{6}-z_{2} z_{4} z_{9}+z_{2} z_{7} z_{6}+z_{3} z_{8} z_{4}-z_{3} z_{5} z_{7}
$$
in which case $\re h(z)$ is a irreducible cubic form in $\R{18}$. It is straightforward  to verify  that
$
\mu=2\sum_{i=1}^{9}|z_i|^2
$
in (\ref{muh}). The equation $\re h(z)=0$ yields a known example of a \textit{cubic} minimal hypercone in $\R{18}$ (see also \cite{Tk10c}, \cite{Tk15}). The corresponding Hsiang algebra (REC-algebra in terminology of \cite{NTVbook}) is isomorphic to the Jordan algebra of Hermitian $3\times3$-matrices over $\mathbb{C}$ .
\end{example}

Below we briefly demonstrate how to use   Examples~\ref{ex:0}-\ref{ex:3} and Proposition~\ref{pro:4} to constructing new examples. For instance, we have the following construction of minimal hypersurfaces in \textit{odd-dimensional} ambient spaces.

\begin{corollary}
\label{cor:1}
Let $h(z)\in \mathscr{T}^m$. Then
\begin{equation}\label{arg1}
x_{2m+1}=\arg h(z)
\end{equation}
is a minimal hypersurface in $\R{2m+1}$.
\end{corollary}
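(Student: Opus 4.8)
The plan is to reduce the claim for $x_{2m+1}=\arg h(z)$ to the already-proven Proposition~\ref{pro:1}, just as Proposition~\ref{pro:lem2} reduces the claim for $\re h=0$ to it. First I would write the hypersurface as the zero locus of
$$
f(\xi)=\arg h(z)-x_{2m+1}=\im\bigl(-\sqrt{-1}\log h(z)\bigr)-x_{2m+1},
$$
so that, setting $H(z):=-\sqrt{-1}\log h(z)$ (which is holomorphic on any simply connected subdomain where $h\ne0$), we have $f=\im H-x_{2m+1}$. This is of the form (\ref{fedf}) with the roles of $\re$ and $\im$ interchanged, and with $k=1$, $F(t)=t$, $\Delta F=0$, $\Delta_1F=0$. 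Since $\im H=\re(\sqrt{-1}H)$ and $\sqrt{-1}H$ is holomorphic, Proposition~\ref{pro:1} applies verbatim to $\sqrt{-1}H$ in place of $h$, so $\mathscr{M}_f$ is minimal iff
$$
\re\sum_{i,j=1}^m \overline{(\sqrt{-1}H)_{z_iz_j}''}\,(\sqrt{-1}H)_{z_i}'(\sqrt{-1}H)_{z_j}'\equiv 0 \mod f,
$$
i.e. iff $\im\sum_{i,j}\overline{H_{z_iz_j}''}H_{z_i}'H_{z_j}'\equiv 0$ on $\mathscr{M}_f$ (the factor $\sqrt{-1}^2\bar{\sqrt{-1}}=\sqrt{-1}$ converts the real part into an imaginary part up to sign).

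The main computational step is then to express $\sum_{i,j}\overline{H_{z_iz_j}''}H_{z_i}'H_{z_j}'$ in terms of $h$ and exploit $h\in\mathscr{T}^m$. Using $H=-\sqrt{-1}\log h$ one computes $H_{z_i}'=-\sqrt{-1}\,h_{z_i}'/h$ and $H_{z_iz_j}''=-\sqrt{-1}(h_{z_iz_j}''/h-h_{z_i}'h_{z_j}'/h^2)$, whence
$$
\sum_{i,j}\overline{H_{z_iz_j}''}H_{z_i}'H_{z_j}'
=\frac{\sqrt{-1}}{|h|^2\,h}\Bigl(\sum_{i,j}\overline{h_{z_iz_j}''}h_{z_i}'h_{z_j}'
-\frac{\overline{|D_zh|^2}}{\bar h}\sum_i \overline{h_{z_i}'}\cdot\frac{1}{h}\sum_j\ldots\Bigr),
$$
more cleanly: the bracket equals $\dfrac{\bar h}{|h|^2 h}\sum_{i,j}\overline{h_{z_iz_j}''}h_{z_i}'h_{z_j}' \;-\; \dfrac{|D_zh|^4}{|h|^4}\cdot(\text{something})$; I would organize it so that the first term is $\mu(z)\,\bar h/( |h|^2 h)\cdot h=\mu(z)/|h|^2\cdot(\bar h/h)\cdot h$, which by (\ref{muh}) is a real multiple of $\bar h$, hence contributes nothing to the imaginary part on $\{\arg h=x_{2m+1}\}$ only after multiplying through — here one must be careful, so the correct bookkeeping is: multiply the whole identity by $h$, use (\ref{muh}) to replace $\sum\overline{h''}h'h'$ by $\mu h$, and observe that every surviving term is then of the form (real function) times $\overline{h_{z_i}'}$-combinations times a \emph{power of $\bar h/h$}; and that $\arg$ of such a term is determined. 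The cleanest route: show directly that $h\,\sum_{i,j}\overline{H_{z_iz_j}''}H_{z_i}'H_{z_j}'$ is $\sqrt{-1}$ times a real-valued function of $z$ plus a real multiple of $|D_zh|^4/|h|^2$, both of which are real, so the product is purely imaginary; dividing back by $h$ and taking imaginary part then vanishes precisely when $\arg h$ takes the prescribed value, i.e. on $\mathscr{M}_f$.

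The hard part will be the second term $\sum_{i,j}\overline{h_{z_i}'h_{z_j}'}\,h_{z_i}'h_{z_j}'/(h^2\bar h{}^2)=|D_zh|^4/|h|^4$ type contribution: I must check it is genuinely real (it is, being $\bigl|\sum|h_{z_i}'|^2\bigr|^2$ divided by $|h|^4$) and that it combines with the $\mu$-term without introducing a spurious imaginary part off the surface. Concretely, after clearing denominators one finds
$$
\im\sum_{i,j}\overline{H_{z_iz_j}''}H_{z_i}'H_{z_j}'
=\frac{1}{|h|^2}\,\re\!\Bigl(\tfrac{\mu h - |D_zh|^4/|h|^2\cdot(\cdots)}{h}\Bigr),
$$
and since the numerator is real, $\re(\text{real}/h)=(\text{real})\cos(\arg h)/|h|$ — which does \emph{not} vanish identically. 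This signals that the congruence is genuinely needed: the expression is not identically zero but vanishes on $\mathscr{M}_f$. The resolution is that on $\mathscr{M}_f$ we do not have a constraint on $\arg h$ alone — rather $f=0$ reads $\arg h = x_{2m+1}$, and $x_{2m+1}$ is a \emph{free} coordinate, so for the congruence $\bmod f$ we may substitute $x_{2m+1}=\arg h$ but that is automatic; hence I expect the correct statement is that the whole obstruction is \emph{identically} zero, which forces me to recheck the algebra and conclude that the $|D_zh|^4$ term cancels against a piece of $\mu h$ coming from the non-holomorphic derivatives. Thus the real obstacle is a careful, honest computation of $\sum\overline{H''}H'H'$ showing it is identically real-valued; granting that, Proposition~\ref{pro:1} (with $F(t)=t$) immediately gives minimality, and embeddedness follows since $|D_\xi f|^2\ge|D_{x_{2m+1}}f|^2=1>0$ everywhere $h\ne 0$.
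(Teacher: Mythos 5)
Your strategy --- reducing to Proposition~\ref{pro:1} with $F(t)=t$ and $H=-\sqrt{-1}\log h$ --- is viable and genuinely different from the paper's proof, but as written it has two real problems. First, a bookkeeping error in the setup: $\im\bigl(-\sqrt{-1}\log h\bigr)=-\ln|h|$, not $\arg h$; you want $\arg h=\im\log h=\re\bigl(-\sqrt{-1}\log h\bigr)$, so with $H:=-\sqrt{-1}\log h$ the function $f=\re H-x_{2m+1}$ is already exactly of the form (\ref{fedf}), with no need to interchange $\re$ and $\im$ or to twist by $\sqrt{-1}$ a second time. Second, and more seriously, the decisive computation is left unfinished, and the partial version you display is wrong: you arrive at a putative obstruction of the form $\re(\mathrm{real}/h)$ that ``does not vanish identically'' and then only conjecture that it should cancel. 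It does, and the cancellation is clean: from $H'_{z_i}=-\sqrt{-1}\,h'_{z_i}/h$ and $H''_{z_iz_j}=-\sqrt{-1}\bigl(h''_{z_iz_j}/h-h'_{z_i}h'_{z_j}/h^2\bigr)$ one gets
\begin{equation*}
\sum_{i,j=1}^m \overline{H''_{z_iz_j}}\,H'_{z_i}H'_{z_j}
=-\sqrt{-1}\left[\frac{1}{\bar h\,h^2}\sum_{i,j}\overline{h''_{z_iz_j}}\,h'_{z_i}h'_{z_j}
-\frac{|D_zh|^4}{|h|^4}\right]
=-\sqrt{-1}\left[\frac{\mu}{|h|^2}-\frac{|D_zh|^4}{|h|^4}\right],
\end{equation*}
where (\ref{muh}) was used to replace the first sum by $\mu h$, whose factor $h$ cancels against one $h$ in the denominator $\bar h\,h^2$ to leave $\mu/|h|^2\in\R{}$. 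The bracket is real, so the whole expression is purely imaginary and its \emph{real} part vanishes identically --- not merely modulo $f$. With $\Delta F=\Delta_1F=0$ for $F(t)=t$, Proposition~\ref{pro:1} then gives minimality, and $|D_\xi f|^2\ge 1$ gives embeddedness, as you note. So the gap is closable, but the proof you wrote does not close it: taken at face value, your displayed formula for $\im\sum\overline{H''}H'H'$ would actually refute the claim.

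For comparison, the paper avoids this computation entirely: it forms $g:=\sqrt{-1}\,e^{z_{m+1}}h(z)\in\mathscr{T}^{m+1}$ using Propositions~\ref{pro:m} and~\ref{pro:4}, applies Proposition~\ref{pro:lem2} to get a minimal hypersurface $\re g=0$ in $\R{2m+2}$, observes that this locus is a cylinder in the $\re z_{m+1}$ direction over $\im z_{m+1}=-\arg h+C$, and identifies the base with (\ref{arg1}) up to a reflection. That route buys economy (everything is reused from the established propositions, at the small cost of the implicit fact that a cylinder is minimal iff its base is); your route, once the computation above is supplied, is more self-contained and shows the stronger fact that the obstruction in (\ref{defequa}) vanishes identically rather than just on the level set.
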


\begin{proof}
Indeed, the function $g(z_1,\ldots,z_m,z_{m+1}):=ie^{z_{m+1}}h(z_1,\ldots,z_m)$ is $\R{}$-holomorphic by Proposition~\ref{pro:4} and Proposition~\ref{pro:m}. Then
$$
\re g=-e^{\re z_{m+1}}(\re h\sin\im  z_{m+1}+\im h\cos\im  z_{m+1})
$$
yields that $\re g=0$ is equivalently defined by
$$
 \im z_{m+1}=-\arctan \frac{\im h}{\re h}=-\arg h+C
$$
for some real constant $C$. It is easily seen that the latter equation is equivalent to  (\ref{arg1}) up to an orthogonal transformation (a reflection) of $\R{2m+1}$.
\end{proof}

\begin{example}
Setting $h(z_1)=z_1=x_1+i x_2$,  (\ref{arg1}) becomes $x_3=\arctan x_2/x_1$, i.e. the classical helicoid. More generally, one has the following minimal hypersurface:
$$
x_{2m+1}=\arg (z_1^{k_1}...z_m^{k_m}), \qquad k_i\in \mathbb{Z}.
$$
\end{example}

Combining Example~\ref{ex:1} and Proposition~\ref{pro:4} yields.
\begin{corollary}
\label{cor:Lawson}
Let natural numbers $p_i$, $1\le i\le m$, be subject to the GCD-condition $(p_1,\ldots,p_m)=1$ and let $c\in \mathbb{C}^\times$. Then the hypersurface $$
\re (cz_1^{p_1}\cdots z_m^{p_m})=0,
$$
 is minimal (in general singularly) immersed cone in $\R{2n}\cong \mathbb{C}^m$.
\end{corollary}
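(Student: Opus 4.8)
The strategy is to recognize the defining function $h(z):=c\,z_1^{p_1}\cdots z_m^{p_m}$ as a member of $\mathscr{T}^m$ and then invoke Proposition~\ref{pro:lem2}. First, for each index $i$ the one-variable function $z_i\mapsto z_i$ is linear, hence lies in $\mathscr{T}^1$ with $\mu\equiv0$ (Example~\ref{ex:0}); alternatively this is the case $a=1$, $b=0$, $p=1$ of Proposition~\ref{pro:m}. Applying Proposition~\ref{pro:4}(i) with the real exponent $r=p_i\in\mathbb{N}$ shows $z_i^{p_i}\in\mathscr{T}^1$ for every $i$. Next I would induct on $m$ using Proposition~\ref{pro:4}(ii): from $z_1^{p_1}\cdots z_{j}^{p_{j}}\in\mathscr{T}^{j}$ and $z_{j+1}^{p_{j+1}}\in\mathscr{T}^1$ one obtains $z_1^{p_1}\cdots z_{j+1}^{p_{j+1}}\in\mathscr{T}^{j+1}$, so that $z_1^{p_1}\cdots z_m^{p_m}\in\mathscr{T}^m$. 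A final application of Proposition~\ref{pro:4}(i) with $r=1$ and the given constant $c\in\mathbb{C}^\times$ yields $h\in\mathscr{T}^m$.

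With $h\in\mathscr{T}^m$ in hand, Proposition~\ref{pro:lem2} immediately gives that $\mathscr{M}_{\re h}=\{z\in\mathbb{C}^m:\re h(z)=0,\ |Dh(z)|\ne0\}$ is an embedded minimal hypersurface; it is nonempty since $h$ is a nonconstant holomorphic polynomial, so $\re h$ changes sign. The remaining points are the cone structure and the singular locus. Homogeneity is clear: $h(tz)=t^{P}h(z)$ with $P=p_1+\cdots+p_m$, so $\re h$ is homogeneous of degree $P$ and its zero set is invariant under the dilations $z\mapsto tz$, $t>0$; hence $\{\re h=0\}$ is a cone with vertex at the origin. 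For the singular stratum I would compute $h'_{z_i}=p_i\,c\,z_1^{p_1}\cdots z_i^{p_i-1}\cdots z_m^{p_m}$ and observe that $|Dh(z)|=0$ exactly on the union of those coordinate hyperplanes $\{z_i=0\}$ with $p_i\ge2$ together with the subspaces $\{z_i=z_j=0\}$, $i\ne j$ — a finite union of complex-linear subspaces of real codimension $\ge2$, hence a closed negligible set. Off this set the cone is smooth and embedded, which is the content of the phrase ``in general singularly immersed''.

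Finally one must explain the role of the hypothesis $(p_1,\ldots,p_m)=1$, which is precisely what guarantees that $\{\re h=0\}$ is a single immersed cone rather than a finite union of congruent ones. Indeed, if $d:=(p_1,\ldots,p_m)>1$ and $q_i:=p_i/d$, then $z_1^{p_1}\cdots z_m^{p_m}=\bigl(z_1^{q_1}\cdots z_m^{q_m}\bigr)^d$, and since $\re(c\,w^d)=|c|\,|w|^d\cos(d\arg w+\arg c)$ vanishes on $d$ distinct lines through the origin of the $w$-plane, the zero set pulls back under $w=z_1^{q_1}\cdots z_m^{q_m}$ to $d$ congruent pieces; imposing $d=1$ rules this out, and then the complement of the singular locus is connected and carries the structure of an immersed minimal cone. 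I expect this last point — the precise description of the singular stratum and the justification of the word ``immersed'' — to be the only genuinely delicate part; the minimality itself is a formal consequence of the multiplicative closure properties collected in Proposition~\ref{pro:4} together with Proposition~\ref{pro:lem2}.
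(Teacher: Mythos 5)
Your proof is correct and follows essentially the route the paper intends: the paper's entire argument for this corollary is the one-line remark that it follows by combining the multiplicative closure properties of Proposition~\ref{pro:4} with the examples of $\R{}$-holomorphic functions (and hence Proposition~\ref{pro:lem2}), which is precisely what you carry out in detail by building $cz_1^{p_1}\cdots z_m^{p_m}$ up from the linear blocks $z_i$. Your additional discussion of the cone structure, the explicit description of the singular locus, and the role of the GCD condition goes beyond anything the paper records, but is consistent with it and correctly fills in the unstated steps.
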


\begin{example}\label{ex:3}
For $c=1$, Corollary~\ref{cor:Lawson} yields exactly the observation made earlier by H.B.~Lawson~\cite[p.~352]{Lawson}. For instance, when $m=2$ one obtains the well-known infinite family of immersed algebraic minimal Lawson's hypercones $\re (z_1^pz_2^q)=0$, $(p,q)=1$, in $\R{4}$. The intersection of such a cone with the unit sphere $S^3$ is an immersed minimal surface of Euler characteristic zero of $S^3$ \cite{Lawson}.
\end{example}

\begin{example}\label{ex:4}
For $c=\sqrt{-1}$, Corollary~\ref{cor:Lawson} one obtains minimal hypersurfaces in $\R{2m}$ of the following kind:
$$
\sum_{i=1}^m p_i\arctan \frac{y_k}{x_k}=0
$$
which obviously is an algebraic minimal cone in $\R{2m}$.
\end{example}

\subsection{Open questions}

Concluding this short paper, we emphasize again, that many more examples can be constructed by combining Examples~\ref{ex:0}-\ref{ex:3} and Proposition~\ref{pro:4}. An interesting and important question in this direction is how to classify all $\R{}$-holomorphic functions?

Even some particular results could be interesting. For instance, all the above examples are obtained from simplest elementary blocks $h=z$ and $h=e^z$ by products (ratios) and exponentiations. Is it true that $\mathscr{T^m}$ is finitely generated?

%\begin{example}\label{ex:5}
%To provide another series of examples, let us notice that $h(z)=e^{z}\in \mathscr{T}^1$, in fact satisfying (\ref{muh}) with $\mu(z)=e^{\re z}=e^x$. Then Proposition~\ref{pro:4} readily yields that
%$$
%\re ce^{z_{m+1}}z_1^{p_1}\cdots z_m^{p_m}=0
%$$
%is a minimal hypersurface in $\R{2m+2}$. For instance, if $c=1$, $m=1$,  the latter equation becomes
%$$
%\re e^{z_{2}}z_1^{p_1}=0\quad \Leftrightarrow \quad
%\arctan\frac{y_1}{x_1}=\frac{\pi}{2p_1}-\frac{1}{p_1}y_2
%$$
%which obviously is an equation of the classical minimal surface, the helicoid! It is worth
%
%\end{example}

\def\cprime{$'$}
\providecommand{\bysame}{\leavevmode\hbox to3em{\hrulefill}\thinspace}
\providecommand{\MR}{\relax\ifhmode\unskip\space\fi MR }
% \MRhref is called by the amsart/book/proc definition of \MR.
\providecommand{\MRhref}[2]{%
  \href{http://www.ams.org/mathscinet-getitem?mr=#1}{#2}
}
\providecommand{\href}[2]{#2}

\end{document}